\numberwithin{equation}{section}
\theoremstyle{plain}
\newtheorem{thm}[subsection]{Theorem}
\newtheorem{prop}[subsection]{Proposition}
\newtheorem{lem}[subsection]{Lemma}
\theoremstyle{definition}
\newcommand{\Z}{\mathbb Z}
\providecommand{\keywords}[1]
{
  \noindent\small	
  {\textit{Keywords}:} #1
}
\providecommand{\subjclass}[1]
{
  \noindent\small	
  {\textit{Mathematics Subject Classification} (2020):} #1
}
\title{A simple evaluation of a theta value and the Kronecker limit formula}
\author{Fernando Chamizo\thanks{The author is partially supported by the  PID2020-113350GB-I00 grant of the MICINN (Spain) and by ``Severo Ochoa Programme for Centres of Excellence in R\&{D}'' (SEV-2015-0554).}
}
\date{July 18, 2021}
\begin{document}

\maketitle

%

%
%

\begin{abstract}
We evaluate the classic sum $\sum_{n\in\Z} e^{-\pi n^2}$. The novelty of our approach is that it does not require any prior knowledge about modular forms, elliptic functions or analytic continuations. Even the $\Gamma$ function, in terms of which the result is expressed, only appears as a complex function in the computation of a real integral by the residue theorem. Another contribution of this note is to provide a very simple proof of the Kronecker limit formula. 
\end{abstract}

\keywords{Theta function, Gamma function}

\subjclass{Primary 11F67, 11Y60; Secondary 11F27}

\maketitle

\section{Introduction}

Our goal is to give a proof of the following result with very few prerequisites. In general, the special values of theta and allied functions are related to deep topics in number theory (complex multiplication, class field theory, modular forms, elliptic functions, etc., cf. \cite{cox}, \cite{ChRa}) which we avoid here. 

\begin{thm}\label{main}
 Consider $\theta(z)= \sum_{n\in\Z} e^{\pi i n^2 z}$. Then
 \[
  \theta(i)=(2\pi)^{-1/4}
  \sqrt{\frac{\Gamma(1/4)}{\Gamma(3/4)}}
  =
  \frac{\Gamma(1/4)}{\pi^{3/4}\sqrt{2}}
 \]
 with $\Gamma$ the classical Gamma function $\Gamma(s)=\int_0^\infty t^{s-1}e^{-t}\; dt$. 
\end{thm}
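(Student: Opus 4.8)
\noindent\emph{Proof strategy.} The plan is to deduce $\theta(i)$ from a Kronecker‑type limit formula for the Gaussian lattice $\Z^2$, extracting the value of $\Gamma(1/4)/\Gamma(3/4)$ only at the very last step, from a single real integral computed by the residue theorem. The guiding object is the Epstein zeta function $Z(s)=\sum_{(m,n)\neq(0,0)}(m^2+n^2)^{-s}$. Setting $F(t)=\sum_{m,n\in\Z}e^{-\pi(m^2+n^2)t}=\theta(it)^2$, two‑dimensional Poisson summation (equivalently, squaring $\theta(i/t)=\sqrt t\,\theta(it)$) gives $F(1/t)=t\,F(t)$; since $\pi^{-s}\Gamma(s)Z(s)=\int_0^\infty(F(t)-1)t^{s-1}\,dt$ for $\mathrm{Re}\,s>1$, splitting at $t=1$ and folding $\int_0^1$ via $F(t)=t^{-1}F(1/t)$ produces
\[
 \pi^{-s}\Gamma(s)Z(s)=\int_1^\infty (F(t)-1)\bigl(t^{s-1}+t^{-s}\bigr)\,dt+\frac{1}{s-1}-\frac1s ,
\]
whose right‑hand side converges for every $s$; every value outside the region of absolute convergence that I shall use arises from such an explicit convergent integral, so no external theory of analytic continuation is invoked. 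Laurent‑expanding this identity at $s=1$ exhibits the residue of $Z$ there as $\pi$ and its constant term, call it $\mathcal C$, as an exponentially convergent $\theta$‑integral.

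Next I would invoke the simple proof of the Kronecker limit formula promised in the abstract. At $z=i$ the real‑analytic Eisenstein series reduces to $Z(s)$, and the limit formula says precisely that $\mathcal C$ equals the Kronecker constant, namely (in the classical normalisation) $2\pi\bigl(\gamma-\log 2-\log(\sqrt y\,|\eta(z)|^2)\bigr)$ at $z=i$; using $\theta(i)^2=2\,\eta(i)^2$ this reads $\mathcal C=2\pi\bigl(\gamma-\log\theta(i)^2\bigr)$. (If one prefers, this step can be carried out directly for the theta sum, never mentioning $\eta$, by computing the relevant Fourier expansion; the outcome is the same formula.) On the other hand, unique factorisation in $\Z[i]$ — i.e. the classical sum‑of‑two‑squares count — gives $Z(s)=4\zeta(s)L(s,\chi)$ with $\chi$ the non‑principal character modulo $4$ and $L(1,\chi)=\pi/4$, so expanding $\zeta(s)=\tfrac1{s-1}+\gamma+\cdots$ yields $\mathcal C=\pi\gamma+4L'(1,\chi)$. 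Comparing the two expressions for $\mathcal C$ gives $\log\theta(i)^2=\tfrac\gamma2-\tfrac2\pi L'(1,\chi)$.

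To turn $L'(1,\chi)$ into $\Gamma$‑values I would use the functional equation relating $L(s,\chi)$ and $L(1-s,\chi)$ — itself obtained elementarily from Poisson summation applied to the odd theta series $\sum_n n\,e^{-\pi n^2 t}$ — and differentiate it at $s=1$; with the values of $\Gamma'/\Gamma$ at $1$ and $\tfrac12$ this rewrites the previous relation as $\log\theta(i)^2=L'(0,\chi)-\tfrac12\log(\pi/2)$. Now $L(s,\chi)=4^{-s}\bigl(\zeta(s,\tfrac14)-\zeta(s,\tfrac34)\bigr)$ in terms of the Hurwitz zeta function, and $\zeta(0,a)=\tfrac12-a$ gives $L'(0,\chi)=-\log2+\zeta'(0,\tfrac14)-\zeta'(0,\tfrac34)$. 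The one remaining input is $\zeta'(0,a)=\log\bigl(\Gamma(a)/\sqrt{2\pi}\bigr)$, and this is exactly where $\Gamma$ enters as a complex function: it follows from a Binet‑type integral for $\log\Gamma$ — or, in the spirit of the abstract, from evaluating a real integral such as $\int_0^\infty x^{a-1}(1+x)^{-1}\,dx=\pi/\sin\pi a$ and a companion by the residue theorem on a keyhole contour — and it gives $\zeta'(0,\tfrac14)-\zeta'(0,\tfrac34)=\log\bigl(\Gamma(\tfrac14)/\Gamma(\tfrac34)\bigr)$. Assembling the constants, $\log\theta(i)^2=\log\bigl(\Gamma(\tfrac14)/\Gamma(\tfrac34)\bigr)-\tfrac12\log(2\pi)$, and taking the positive square root (the series for $\theta(i)$ is visibly positive) gives $\theta(i)=(2\pi)^{-1/4}\sqrt{\Gamma(1/4)/\Gamma(3/4)}$; the second form in the statement follows from $\Gamma(\tfrac14)\Gamma(\tfrac34)=\pi\sqrt2$.

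The main obstacle I anticipate is not any single computation but the bookkeeping of additive constants, concentrated in the Kronecker limit formula step: producing its constant term \emph{exactly} with a genuinely elementary, Bessel‑function‑free treatment of the Fourier expansion, and then matching it against the $\zeta$--$L$ side, which forces clean control of $\zeta'(0)=-\tfrac12\log 2\pi$ and of the $\Gamma$‑factors in the functional equation of $L(s,\chi)$. By comparison the final residue computation producing $\Gamma(1/4)/\Gamma(3/4)$ should be routine once the correct real integral has been singled out. A secondary point to watch is to present every passage $s\to1$ (and $s\to0$) as the continuity of a uniformly convergent integral, so that the promise of ``no analytic continuation'' is literally kept.
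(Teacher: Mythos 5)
Your outline is arithmetically consistent (the constants do check out: $\log\theta(i)^2=\log\bigl(\Gamma(1/4)/\Gamma(3/4)\bigr)-\tfrac12\log(2\pi)$), but it follows a genuinely different, essentially Chowla--Selberg-style route rather than the paper's. You go through two-dimensional Poisson summation and the completed integral representation of $Z(s)$, the classical Kronecker constant $2\pi(\gamma-\log 2-\log(\sqrt y\,|\eta|^2))$, the functional equation of $L(s,\chi)$ to trade $L'(1)$ for $L'(0)$, and finally the Hurwitz-zeta/Lerch formula $\zeta'(0,a)=\log(\Gamma(a)/\sqrt{2\pi})$. The paper instead proves a compact Kronecker limit formula for a general binary form purely by the residue theorem with a cotangent kernel (no Poisson summation, no theta inversion), and then never touches $s=0$: it writes $\Gamma(s)L(s)=\int_0^\infty t^{s-1}(2\cosh t)^{-1}dt$, differentiates at $s=1$, and evaluates $I=\pi^{-1}\int_0^\infty(\log t)/\cosh t\,dt$ by a single residue computation with $i\sec(2\pi z)\log\Gamma(1/2+z)$; this is where $\Gamma(1/4)/\Gamma(3/4)$ enters, so the functional equation of $L$, the Hurwitz zeta function and Lerch's formula are all avoided. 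What your route buys is familiarity and generality (it is the standard derivation of such special values); what the paper's route buys is exactly the economy promised in its abstract.

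Two of your inputs are under-justified relative to that standard. First, $\theta(i)^2=2\,\eta(i)^2$: from the triple product one only gets $\theta(i)=|\eta((1+i)/2)|^2/|\eta(i)|$, and the missing identity $|\eta((1+i)/2)|^2=\sqrt2\,|\eta(i)|^2$ requires the modular transformation of $\eta$ (or a substitute); the paper obtains it without any $\eta$-transformation by applying its limit formula to the two equivalent forms $x^2+y^2$ and $2x^2-2xy+y^2$, which share the same Epstein zeta function. Second, the step $\zeta'(0,a)=\log\bigl(\Gamma(a)/\sqrt{2\pi}\bigr)$, together with the differentiated functional equation of $L(s,\chi)$, is a substantive block of analysis that a keyhole-contour evaluation of $\int_0^\infty x^{a-1}(1+x)^{-1}dx$ does not by itself deliver; you would need a Binet/Kummer-type argument or Stirling with explicit constant. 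Neither point is fatal --- both can be supplied --- but as written they are the places where your proof is not yet complete, and they are precisely the machinery the paper's argument is designed to bypass.
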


We will prove the first equality, the second follows from the relation $\Gamma(s)\Gamma(1-s)=\pi\csc(\pi s)$ that do not use elsewhere. In fact the $\Gamma$ function only appears as a complex function in the computation of an integral (Lemma~\ref{integral}) and, beyond that, we barely use its defining integral representation for $s>1$.
\smallskip

Except for a special case of the Jacobi triple product identity and the well known formula for the number of representations as a sum of two squares (both separated in \S\ref{auxres} and admitting elementary proofs, not included here), the proof is completely self-contained. The techniques only involve basic real and complex variable methods. 
No modular properties of $\theta$ and $\eta$ and no functional equations of any $L$-function or Eisenstein series nor their analytic continuations,  are required. 

Our argument includes a proof of a version of the (first) Kronecker limit formula (Proposition~\ref{kronecker}) simpler than the ones we have found in the literature (cf. \cite{motohashi}) which may have independent interest. We address the reader to the interesting paper \cite{DuImTo} for the history and relevance of this formula. 

\section{Two auxiliary results}\label{auxres}

We first recall the factorization of the $\theta$ function. 

\begin{lem}\label{triple}
 For $|q|<1$,
 \[
  \sum_{n=-\infty} ^\infty 
  q^{n^2}
  =
  \prod_{n=1}^\infty
  \big(1-q^{2n}\big)\big(1+q^{2n-1}\big)^2.
 \]
\end{lem}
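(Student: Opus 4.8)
The identity is the specialization at $w=1$ of the Jacobi triple product formula, so the plan is to prove that formula by the classical functional‑equation argument and then set $w=1$. Fix $q$ with $|q|<1$ and put $F(w)=\prod_{n=1}^{\infty}(1+q^{2n-1}w)(1+q^{2n-1}w^{-1})$ for $w\in\C\setminus\{0\}$. Since $\sum_{n\ge 1}|q|^{2n-1}<\infty$, this product converges absolutely and uniformly on compact subsets of $\C\setminus\{0\}$, so $F$ is holomorphic there and admits a Laurent expansion $F(w)=\sum_{n\in\Z}a_n(q)\,w^n$; the symmetry $F(w)=F(w^{-1})$ forces $a_{-n}=a_n$.

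The first main step is the functional equation. Replacing $w$ by $q^2w$ and reindexing each of the two products gives $F(q^2w)=\dfrac{1+q^{-1}w^{-1}}{1+qw}\,F(w)=q^{-1}w^{-1}F(w)$, i.e.\ $qw\,F(q^2w)=F(w)$. Inserting the Laurent series and comparing coefficients of $w^n$ yields the recursion $a_n=q^{2n-1}a_{n-1}$, hence $a_n=q^{n^2}a_0(q)$ for $n\ge 0$ and then, by the symmetry above, for all $n$. Therefore $F(w)=a_0(q)\sum_{n\in\Z}q^{n^2}w^n$, and evaluating at $w=1$ already gives $\prod_{n\ge 1}(1+q^{2n-1})^2=a_0(q)\sum_{n\in\Z}q^{n^2}$. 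Everything now reduces to the identity $a_0(q)=\prod_{n\ge 1}(1-q^{2n})^{-1}$, which is the real obstacle.

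To pin down $a_0(q)$ I would evaluate $F$ at $w=-1$ and at $w=i$. The value at $w=-1$ gives $\sum_{n\in\Z}(-1)^nq^{n^2}=a_0(q)^{-1}\prod_{n\ge 1}(1-q^{2n-1})^2$; the value at $w=i$ gives $\prod_{n\ge 1}(1+q^{4n-2})=a_0(q)\sum_{k\in\Z}(-1)^k q^{4k^2}$, since the odd‑index terms of $\sum_n q^{n^2}i^n$ cancel in pairs $n\leftrightarrow-n$. Substituting the first relation with $q$ replaced by $q^4$ into the second and using $\prod(1+q^{4n-2})=\prod(1-q^{8n-4})/\prod(1-q^{4n-2})$ collapses everything to the self‑similar relation $a_0(q)/a_0(q^4)=\prod_{n\ge 1}\big((1-q^{4n-2})(1-q^{8n-4})\big)^{-1}$. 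Iterating $q\mapsto q^4\mapsto q^{16}\mapsto\cdots$ and using that $a_0(q)\to 1$ as $q\to 0$ (immediate from $a_0(q)=\prod(1+q^{2n-1})^2/\sum q^{n^2}$) expresses $a_0(q)^{-1}$ as an infinite product of factors $1-q^m$ whose exponents run exactly over the integers $2^j u$ with $j\ge 1$ and $u$ odd, that is, over each even positive integer once; hence $a_0(q)=\prod_{n\ge 1}(1-q^{2n})^{-1}$, and together with the $w=1$ evaluation this is the assertion.

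A purely combinatorial derivation of the triple product from Euler's manipulations of partition generating functions is also available, but the analytic argument above is shorter and fits the style of the rest of the note, so that is the one I would write out.
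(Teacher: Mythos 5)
The paper deliberately states Lemma~\ref{triple} without proof, deferring to the references (elementary combinatorial proofs, and the classical complex-analytic proof in \cite{StSh}, \cite{rademacher}), so there is no in-paper argument to compare against; your proposal is a correct and essentially complete rendition of that classical argument, just phrased through Laurent coefficients rather than double quasi-periodicity. The functional equation $qwF(q^2w)=F(w)$ does give $a_n=q^{2n-1}a_{n-1}$ and hence $F(w)=a_0(q)\sum_n q^{n^2}w^n$, and your identification of $a_0(q)$ via the evaluations at $w=-1$ and $w=i$ together with the $q\mapsto q^4$ self-similarity is precisely Gauss's device cited in the paper; the exponent bookkeeping $2^{2j+1}(2n-1)$ and $2^{2j+2}(2n-1)$, $j\ge 0$, $n\ge 1$, does exhaust each even positive integer exactly once. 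Two small points are worth making explicit when you write it out: before solving the $w=-1$ relation for $a_0(q)^{-1}$ and dividing by $a_0(q^4)$, note that $a_0(q)\ne 0$ for every $|q|<1$, which is immediate from the $w=i$ evaluation because $\prod_{n\ge1}(1+q^{4n-2})\ne 0$; and the limit $a_0(q^{4^m})\to 1$ is cleanest from the coefficient formula $a_0(q)=\frac{1}{2\pi i}\oint_{|w|=1}F(w)\,\frac{dw}{w}$ and uniform convergence of the product as $q\to 0$, which avoids any discussion of possible vanishing of $\sum_n q^{n^2}$ (though for $|q|$ small your quotient formula is also fine).
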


The next result is the classic formula for $r(n)$, the number of representations of $n$ as a sum of two squares, in terms of the nontrivial character $\chi$ modulo~$4$ (i.e., $\chi(n)=(-1)^{(n-1)/2}$ for $n$ odd and zero for $n$ even).
\begin{lem}\label{rn}
 For $n\in\Z^+$ and $s>1$, we have 
 \[
  r(n)=4\sum_{d\mid n}\chi(n)
  \quad\text{or equivalently,}\quad
  \sum_{n=1}^\infty r(n)n^{-s}=4\zeta(s)L(s)
 \]
 with $\zeta(s)=\sum_{n=1}^\infty n^{-s}$ the Riemann zeta function and $L(s)=\sum_{n=1}^\infty \chi(n) n^{-s}$. 
\end{lem}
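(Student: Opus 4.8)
The plan is to pass to the ring of Gaussian integers $\Z[i]$, where counting representations becomes a pure factorization problem. First I would observe that the two displayed formulas are equivalent: the arithmetic function $n\mapsto\sum_{d\mid n}\chi(d)$ is the Dirichlet convolution of the constant function $1$ with the completely multiplicative $\chi$, so absolute convergence for $s>1$ yields $\sum_{n\geq1}\big(\sum_{d\mid n}\chi(d)\big)n^{-s}=\zeta(s)L(s)$; hence it suffices to prove the pointwise identity $r(n)=4\sum_{d\mid n}\chi(d)$.

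The bridge to $\Z[i]$ is the bijection $(a,b)\mapsto a+bi$ between integer pairs with $a^2+b^2=n$ and elements $z\in\Z[i]$ with $N(z):=z\bar z=n$, so that $r(n)=\#\{z\in\Z[i]:N(z)=n\}$. Using that $\Z[i]$ is Euclidean, hence a UFD with unit group $\{\pm1,\pm i\}$, I would recall how the rational primes split: $2=-i(1+i)^2$ with $1+i$ prime; a prime $p\equiv1\pmod 4$ factors as $p=\pi_p\bar\pi_p$ into two non-associate primes; and a prime $q\equiv3\pmod 4$ stays prime. This splitting is exactly where Fermat's two-square theorem, equivalently the fact that $-1$ is a square modulo $p$ iff $p\equiv1\pmod 4$, enters.

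Now write $n=2^a\prod_{p\equiv1}p^{b_p}\prod_{q\equiv3}q^{c_q}$ and take the unique factorization $z=u\,(1+i)^\alpha\prod_{p}\pi_p^{\beta_p}\bar\pi_p^{\gamma_p}\prod_q q^{\delta_q}$ of an arbitrary $z$ with $N(z)=n$ (only primes dividing $n$ can occur, since $z\mid n$ in $\Z[i]$). Using $\overline{1+i}=-i(1+i)$ and $\pi_p\bar\pi_p=p$, the equation $z\bar z=n$ becomes $2^\alpha\prod_p p^{\beta_p+\gamma_p}\prod_q q^{2\delta_q}=n$, and comparing exponents forces $\alpha=a$, $\beta_p+\gamma_p=b_p$, and $2\delta_q=c_q$. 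Counting the solutions: the unit $u$ contributes a factor $4$, each equation $\beta_p+\gamma_p=b_p$ has $b_p+1$ nonnegative solutions, and the equations $2\delta_q=c_q$ are solvable — uniquely — precisely when every $c_q$ is even. Hence $r(n)=4\prod_{p\equiv1}(b_p+1)$ if all $c_q$ are even, and $r(n)=0$ otherwise.

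To finish I would evaluate $\sum_{d\mid n}\chi(d)=\prod_p\sum_{k=0}^{a_p}\chi(p)^k$ prime by prime: the local factor is $1$ at $p=2$ because $\chi(2)=0$, equals $b_p+1$ at $p\equiv1$ because $\chi(p)=1$, and equals $0$ or $1$ at $q\equiv3$ according as $c_q$ is odd or even because $\chi(q)=-1$; the product of these local factors reproduces exactly the formula for $r(n)$ just obtained. The main obstacle is confined to the algebra of $\Z[i]$: proving it is a UFD, pinning down its unit group, and determining the prime splitting carefully enough that the factor $4$ and the parity conditions come out correctly. Once that is in place, the rest is routine bookkeeping with multiplicative functions.
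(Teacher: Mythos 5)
Your argument is correct, but it is worth noting that the paper does not prove this lemma at all: it is one of the two results deliberately outsourced in \S\ref{auxres}, where the author merely lists possible proofs with references. Among those, your route is precisely the one the paper describes as ``a longer alternative'', namely showing $\Z[i]$ is a UFD and exploiting the splitting of rational primes there, whose essential arithmetic input is Fermat's two-square theorem (equivalently, that $-1$ is a quadratic residue mod $p$ exactly when $p\equiv 1\pmod 4$; the paper points to Zagier's one-sentence proof for this). The paper's recommended ``less demanding'' proof is instead via the representations of an integer by the quadratic forms in a class \cite[\S12.4]{hua}, which needs only quadratic residues and no algebraic number theory; the modular/elliptic proofs in \cite{StSh} and \cite{rademacher} are heavier still and would defeat the purpose of the note. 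Your bookkeeping is sound: the bijection $r(n)=\#\{z\in\Z[i]:N(z)=n\}$, the comparison of exponents forced by $N(z)=2^{\alpha}\prod p^{\beta_p+\gamma_p}\prod q^{2\delta_q}$, the factor $4$ from the unit group, the parity condition at primes $q\equiv 3\pmod 4$, and the prime-by-prime evaluation of $\sum_{d\mid n}\chi(d)$ all match, and the Dirichlet-series equivalence for $s>1$ is immediate from absolute convergence of the convolution $1*\chi$. The only inputs you leave as citations are the standard structural facts about $\Z[i]$ (Euclidean hence UFD, units $\{\pm1,\pm i\}$, non-associateness of $\pi_p$ and $\bar\pi_p$, inertness of $q\equiv3$), which is acceptable but is exactly the extra overhead that makes this the ``longer'' route. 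Incidentally, the statement as printed has a typo, $\sum_{d\mid n}\chi(n)$ should be $\sum_{d\mid n}\chi(d)$, and your proof establishes the corrected version.
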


We will say some words about their proofs. 

Lemma~\ref{triple} comes from the Jacobi triple product identity which admits elementary combinatorial proofs (see \cite[\S8.3]{hua} and \cite{andrews}) but arguably, even today, the conceptually most enlightening  proof is the classic one based on complex analysis 
\cite[\S10.1]{StSh}. It uses the invariance by two translations of certain entire function to conclude that it is a constant, which is computed with a beautiful argument due to Gauss \cite[\S78]{rademacher}. 

Lemma~\ref{rn} can be derived from the triviality of some spaces of modular forms or from some properties of elliptic functions 
\cite[\S10.3.1]{StSh},
\cite[\S84]{rademacher}.
A less demanding proof, requiring quadratic residues and almost nothing else,  is to use the representations of an integer by the quadratic forms in a class
\cite[\S12.4]{hua}.
A longer alternative is 
to show that $\Z[i]$ is a UFD and deduce the result from 
$r(p)=4(1+\chi(p))$ for $p$ prime, which is essentially Fermat two squares theorem \cite[Art.182]{gauss} (see \cite{zagier} for a ``one-sentence'' proof of the latter).

\section{The Kronecker limit formula and the theta evaluation}

We first state a compact version of the Kronecker limit formula and provide a proof only requiring the residue theorem and 
the very easy \cite[p.23]{iwaniec} and well known result
$(s-1)\zeta(s)\to 1$ as $s\to 1^+$. 

The Epstein zeta function $\zeta(s,Q)$ associated to a positive definite binary quadratic form $Q$ and the Dedekind $\eta$ function are defined by:
\[
 \zeta(s,Q) 
 = 
 \sum_{\vec{n}\in\Z^2\setminus \{\vec{0}\}}
 \big(Q(\vec{n})\big)^{-s}
 \quad\text{ and }\quad
 \eta(z)=
 e^{\pi i z/12}
 \prod_{n=1}^\infty \big(1-e^{2\pi i nz}\big).
\]
We assume $s>1$ and $\Im z>0$ to assure the convergence.

\begin{prop}\label{kronecker}
 Let $Q(x,y)=ax^2+bxy+cy^2$  be a real form with $D=4ac-b^2>0$ and $a>0$. Then 
 \[
  \lim_{s\to 1^+}
  \Big(
  \frac{\sqrt{D}}{4\pi}\zeta(s,Q)
  -
  \zeta(2s-1)
  \Big)
  =
  \log\frac{\sqrt{a/D}}{|\eta(z_Q)|^2}
  \qquad
  \text{with}\quad
  z_Q = 
  \frac{-b+i\sqrt{D}}{2a}.
 \]
\end{prop}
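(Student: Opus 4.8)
The plan is to reduce $\zeta(s,Q)$ to a single-variable Dirichlet series by separating the $n_2=0$ term from the rest and, for each fixed $n_2\neq 0$, summing over $n_1$ via Poisson summation (or directly via the Fourier expansion of the relevant lattice sum). Writing $Q(n_1,n_2)=a(n_1+\tfrac{b}{2a}n_2)^2+\tfrac{D}{4a}n_2^2$, the $n_2=0$ part contributes $2a^{-s}\zeta(2s)$, and the remaining part, after applying the classical formula $\sum_{n\in\Z}(( n+\alpha)^2+\beta^2)^{-s}$ in terms of a $\Gamma$-quotient plus an exponentially small series of Bessel-type integrals, produces a term with a pole at $s=1$ coming from $\Gamma(s-\tfrac12)/\Gamma(s)$ together with a convergent remainder involving $\sum e^{2\pi i m n_2 b/(2a)}$-type sums. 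The single nontrivial analytic input, isolated as Lemma~\ref{integral}, will be the evaluation of the Fourier-type integral $\int_{-\infty}^\infty (t^2+1)^{-s}e^{-2\pi i t\xi}\,dt$ by the residue theorem, which is where the $\Gamma$ function enters as a complex-analytic object.

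Next I would extract the Laurent expansion at $s=1$. The dominant contributions are: a pole term from $\zeta(2s)$ near $s=1$? — no, $\zeta(2s)$ is regular there; the genuine pole comes from the $\Gamma(s-1/2)/\Gamma(s)$ factor attached to the $n_2\neq 0$ sum, which carries a $\zeta(2s-1)$ from summing over $n_2$, hence a simple pole with residue matching, after multiplication by $\tfrac{\sqrt D}{4\pi}$, exactly that of $\zeta(2s-1)$ at $s=1$; subtracting $\zeta(2s-1)$ cancels the pole. Then I collect the finite part: the constant from $(s-1)\zeta(2s-1)\to\tfrac12$, the value of the $\Gamma$-quotient and its derivative at $s=1$ (which brings in $\log$ of the various parameters $a$, $D$), the value $2a^{-s}\zeta(2s)$ at $s=1$ giving a $\tfrac{\pi^2}{3a}$-type term, and the exponentially small series which at $s=1$ collapses to $-2\sum_{n\geq1}\log|1-e^{2\pi i n z_Q}|$ after recognizing a geometric-series/logarithm identity. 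Assembling these and using $\log|\eta(z_Q)|^2 = -\tfrac{\pi}{6}\cdot\tfrac{\sqrt D}{2a}\cdot(-1)\cdots$ — more precisely $\log|\eta(z_Q)|^2 = -\tfrac{\pi\sqrt D}{6a}\cdot\tfrac12 + 2\sum_{n\ge1}\log|1-e^{2\pi i n z_Q}|$ from the product definition with $\Im z_Q=\sqrt D/(2a)$ — should collapse the whole finite part into $\log\bigl(\sqrt{a/D}/|\eta(z_Q)|^2\bigr)$.

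I expect the main obstacle to be bookkeeping rather than conceptual: tracking the precise constants through the $\Gamma$-quotient expansion $\Gamma(s-\tfrac12)/\Gamma(s) = \sqrt\pi\bigl(1 + (s-1)(2\log 2 + \cdots) + O((s-1)^2)\bigr)$ and making sure the $\log 2$, $\log a$, $\log D$, and $\log\pi$ terms combine correctly with the contribution from $(s-1)\zeta(2s-1)$ and with the $\pi^2/(6a)$ piece so that everything irrational or transcendental except the intended $\log$ cancels. A secondary subtlety is justifying the interchange of summation and the term-by-term limit $s\to1^+$ in the exponentially small remainder, which is routine by uniform convergence on a neighborhood of $s=1$ given the exponential decay in $n_2$, and confirming that the residue-theorem evaluation in Lemma~\ref{integral} is valid for $s$ in a right-neighborhood of $1$ (it is, since $s>1/2$ suffices for the contour estimate). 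Once the constant is pinned down by, say, checking a single normalization (e.g.\ the known case $Q=x^2+y^2$, $z_Q=i$), the identification is forced.
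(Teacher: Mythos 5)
Your outline is essentially the classical proof of the first Kronecker limit formula, and it does work, but it is a genuinely different route from the paper's. You complete the square, split off the $n_2=0$ term $2a^{-s}\zeta(2s)$, apply Poisson summation in $n_1$ for each $n_2\neq 0$, and read off a main term $a^{-s}(n_2 y_0)^{1-2s}\sqrt{\pi}\,\Gamma(s-\tfrac12)/\Gamma(s)$ (with $y_0=\sqrt D/(2a)$) whose sum over $n_2$ produces the $\zeta(2s-1)$ pole, plus exponentially small Fourier terms that at $s=1$ collapse to $-\sum_k\log|1-e^{2\pi i k z_Q}|^2$; the archimedean constant then comes from $\tfrac12\frac{d}{ds}\big[\,a^{-s}y_0^{1-2s}\Gamma(s-\tfrac12)/\Gamma(s)\,\big]_{s=1}$, i.e.\ from $\psi(\tfrac12)-\psi(1)=-2\log 2$ combining with $\log a+2\log y_0$ to give $\log\sqrt{a/D}$. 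The paper avoids Poisson summation and all $\Gamma$-quotient/Bessel apparatus: it sums over $m$ by the residue theorem with the kernel $i\cot(\pi n z)$ on a thin horizontal band, and the subtraction of $\zeta(2s-1)$ is implemented by adding $2\zeta(2s-1)f(s)$ with $f(s)=-\int_{-\infty}^{\infty}Q(x,1)^{-s}dx$, which replaces $i\cot(\pi nz)$ by the exponentially decaying $i\cot(\pi nz)-1$; the constant $\log\sqrt{a/D}$ then appears as $f'(1)$, evaluated by a trigonometric substitution and Cauchy's formula. Your route is the standard one and yields the full Fourier expansion of the Eisenstein series as a by-product; the paper's buys minimal prerequisites (only the residue theorem and $(s-1)\zeta(s)\to1$), which is the point of the note. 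Incidentally, Lemma~\ref{integral} is not the Fourier-transform evaluation you guessed: it computes $\int_0^\infty(\log t)/\cosh t\,dt$ and is used only in the theta evaluation, not here.

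The one genuine weakness is your exit strategy for the constants. The entire content of the proposition is the exact finite part, and your fallback of ``pinning the constant by checking the known case $Q=x^2+y^2$'' is not available: the value of $|\eta(i)|$ (equivalently $\theta(i)$) is precisely what the paper derives \emph{from} this proposition, so invoking it would be circular in context, and in any case checking one example cannot rigorously determine an additive constant that has not been proved to be universal. The constant must fall out of the Laurent expansion itself; concretely you need $\zeta(2s-1)\sim\tfrac{1}{2(s-1)}+\gamma$, the observation that the $\gamma$-term is killed because your normalized main factor equals $1$ at $s=1$, the digamma value $\psi(\tfrac12)-\psi(1)=-2\log2$ (note your sketched expansion has the wrong sign there), the $\tfrac{\pi\sqrt D}{12a}$ piece from $2a^{-s}\zeta(2s)$, and the identity $\log|\eta(z_Q)|^2=-\tfrac{\pi y_0}{6}+\sum_k\log|1-e^{2\pi i k z_Q}|^2$. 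With those carried through, your argument closes correctly without any normalization check.
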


\begin{proof}
 Let $f(s)=-\int_{-\infty}^\infty Q(x,1)^{-s}\; dx$. The limit in the statement equals $L_1-L_2$ with 
 \[
  L_1
  =
  \lim_{s\to 1^+}
  \frac{\sqrt{D}}{4\pi}
  \Big(
  \zeta(s,Q)+2\zeta(2s-1)f(s)
  \Big),
  \ 
  L_2
  =
  \lim_{s\to 1^+}
  \zeta(2s-1)\Big(
  \frac{\sqrt{D}}{2\pi}
  f(s)
  +1
  \Big).
 \]
 L'H\^opital's rule shows $L_2=\frac{\sqrt{D}}{4\pi}f'(1)$ because $(2s-2)\zeta(2s-1)\to 1$ (and the residue theorem assures $f(1)=-2\pi/\sqrt{D}$). Then the result follows if we prove
 \begin{equation}\label{goal1}
  L_1=
  -\log|\eta(z_Q)|^2
  \qquad\text{and}\qquad
  f'(1)=-\frac{4\pi}{\sqrt{D}}\log\sqrt{\frac{a}{D}}.
 \end{equation}
 We have $f'(1)=\int_{-\infty}^\infty (\log p)/p$ with $p(x)=ax^2+bx+c$. With the change of variables 
 $2a x+b=\sqrt{D}\tan(t/2)$ we obtain
 \[
  f'(1)
  =
  -
  \frac{2}{\sqrt{D}}
  \int_{-\pi}^\pi
  \log\frac{2|\cos(t/2)|}{\sqrt{D/a}}
  =
  -
  \frac{2}{\sqrt{D}}
  \Re
  \int_{C}
  \log\Big(\frac{1+z}{\sqrt{D/a}}\Big)
  \frac{dz}{iz}
 \]
 with $C$ the unit circle, 
 where we have used $\log \big(2|\cos(t/2)|\big) =\Re \log(1+z)$ with $z=e^{it}$. 
 Cauchy's integral formula gives the second identity in \eqref{goal1}. 
 \medskip
 
 When we sum $Q(m,n)^{-s}$ the contribution of $n=0$ is $2a^{-s}\zeta(2s)$. Let  
 $g_s(z)=Q(z,1)^{-s}+Q(z,-1)^{-s}$.
 For $n\ne 0$, $Q(m,n)=n^2Q(m/n,1)=n^2Q(-m/n,-1)$.
 Then by the residue theorem in the band 
 $B_\epsilon=\{|\Im z|<\epsilon\}$
 with $0<\epsilon <\Im z_Q$,
 \[
  \zeta(s,Q)-2\frac{\zeta(2s)}{a^s}
  =
  \sum_{n=1}^\infty
  \frac{1}{n^{2s}}
  \sum_{m\in\Z} g_s\big(\frac mn\big)
  =
  \sum_{n=1}^\infty
  \frac{-1}{2n^{2s-1}}
  \int_{\partial B_\epsilon}
  g_s(z)i\cot(\pi nz)\; dz.
 \]
 As $g_s$ is even, $\int_{\partial B_\epsilon}=-2\int_{L_\epsilon}$
 with $L_\epsilon = \{\Im z=\epsilon\}$ oriented to the right
 and the sum is $\sum_{n}n^{1-2s}\int_{L_\epsilon}$. Note that $\int_{L_\epsilon}g_s=\int_{L_0}g_s=-2f(s)$. Then adding $2\zeta(2s-1)f(s)$ 
 is equivalent to replace $i\cot(\pi nz)$ by $i\cot(\pi nz)-1$ in $\int_{L_\epsilon}$.
 The expansion 
 $i\cot w-1=2e^{2iw}/(1-e^{2iw})=2(e^{2iw}+e^{4iw}+\dots)$ 
 assures an exponential decay and we have
 \[
  L_1=
  \frac{\sqrt{D}}{4\pi}
  \Big(
  2\frac{\zeta(2)}{a}
  +
  \sum_{n,k=1}^\infty
  \frac{2}{n}
  \int_{L_\epsilon}g_1(z)e^{2\pi inkz}\; dz
  \Big).
 \]
 Substitute $\zeta(2)=\pi^2/6$ and 
 note that 
 $g_1(z)=\big(a(z-z_Q)(z-\bar{z}_Q)\big)^{-1}+\big(a(z+z_Q)(z+\bar{z}_Q)\big)^{-1}$. The residue theorem in $\{\Im z>\epsilon\}$ gives promptly
 \[ 
  L_1
  =
  \frac{\pi\sqrt{D}}{12a}
  +
  \sum_{n,k=1}^\infty
  \frac{1}{n}
  \big(
  e^{2\pi nkiz_Q}+e^{-2\pi nki\bar{z}_Q}
  \big)
  =
  \frac{\pi\sqrt{D}}{12a}
  -
  \sum_{k=1}^\infty
  \log \big|1- e^{2\pi kiz_Q}\big|^2
 \]
 where the second equality comes from $\log(1-w)+\log(1-\bar{w})= \log|1-w|^2$. The sum is 
 $\log\big(|\eta(z_Q)|^2|e^{-\pi i z_Q/6}|\big)$
 and the proof of \eqref{goal1} is complete. 
\end{proof}

The evaluation of an integral will be play a role in the final step of our proof of Theorem~\ref{main}. We proceed again employing the residue theorem.
\begin{lem}\label{integral}
 Let
 \[
  I = 
  \frac{1}{\pi}
  \int_0^\infty
  \frac{\log t}{\cosh t}\; dt
  \qquad\text{then}\quad
  \exp(I)=
  \frac{\Gamma(3/4)}{\Gamma(1/4)}\sqrt{2\pi}.
 \]
\end{lem}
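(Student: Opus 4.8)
The plan is to evaluate $\pi I=\int_0^\infty(\log t)/\cosh t\;dt$ directly by the residue theorem, on a rectangle of height exactly $\pi$, and then to close the resulting identity with one further, much easier, residue computation. First I would fix the branch of $\log$ with its cut on the negative imaginary axis (so $\log$ is real on $(0,\infty)$) and integrate $\log z/\cosh z$ around the boundary of $[-R,R]\times[0,\pi]$, indented by a small semicircle at $z=0$. The only zero of $\cosh$ inside is $z=i\pi/2$, so the integral equals $2\pi i\cdot\log(i\pi/2)/\sinh(i\pi/2)=2\pi\log(\pi/2)+i\pi^2$. The two vertical sides contribute $O\big((\log R)e^{-R}\big)$ and the indentation tends to $0$, so both drop out. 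On the bottom side $\log(-u)=\log u+i\pi$ gives $2\pi I+i\pi^2/2$; on the top side $z=x+i\pi$, where $\cosh(x+i\pi)=-\cosh x$ and $\log(u+i\pi)+\log(-u+i\pi)=\log(u^2+\pi^2)+i\pi$, one gets $\int_0^\infty\log(u^2+\pi^2)/\cosh u\;du+i\pi^2/2$. Equating real parts,
\[
 2\pi I+\int_0^\infty\frac{\log(u^2+\pi^2)}{\cosh u}\;du=2\pi\log\frac{\pi}{2}.
\]

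The second step is to compute $g(b):=\int_0^\infty\log(u^2+b^2)/\cosh u\;du$ for $0\le b\le\pi$, noting $g(0)=2\pi I$. Differentiating under the integral and closing the contour in the upper half-plane, $g'(b)=\int_{-\infty}^{\infty} b/\big((u^2+b^2)\cosh u\big)\,du$ is a sum of residues at $u=ib$ and at $u=i\pi(k+\tfrac12)$, $k\ge 0$; a partial-fraction rearrangement identifies that sum with $4\sum_{k\ge0}(-1)^k/(2k+1+2b/\pi)=\psi\big(\tfrac{3+2b/\pi}{4}\big)-\psi\big(\tfrac{1+2b/\pi}{4}\big)$, where $\psi=(\log\Gamma)'$. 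Integrating from $0$ to $\pi$ and using $\Gamma(5/4)=\tfrac14\Gamma(1/4)$,
\[
 g(\pi)=2\pi I+2\pi\Big(\log\frac{\Gamma(5/4)}{\Gamma(3/4)}-\log\frac{\Gamma(3/4)}{\Gamma(1/4)}\Big)=2\pi I+4\pi\log\frac{\Gamma(1/4)}{2\,\Gamma(3/4)}.
\]

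Combining the two displays, $4\pi I=2\pi\log(\pi/2)-4\pi\log\big(\Gamma(1/4)/2\Gamma(3/4)\big)$, that is $2I=\log\big(2\pi\,\Gamma(3/4)^2/\Gamma(1/4)^2\big)$, hence $\exp(I)=\sqrt{2\pi}\,\Gamma(3/4)/\Gamma(1/4)$, as claimed. Note that $\Gamma$ enters here only through $\psi$ in the partial-fraction series, so no functional equation of an $L$-function and no reflection formula for $\Gamma$ are used. I expect the delicate point to be the interplay of the two computations rather than either one in isolation: the rectangle integral by itself only links the two unknowns $I$ and $g(\pi)$, and it is the separate evaluation of $g(\pi)$ — the sole source of $\Gamma$-values — that makes the system solvable. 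Secondary care is needed in pinning down the determination of $\log$ on the top side of the rectangle, in the (harmless) estimates on the vertical sides, and in justifying differentiation of $g$ all the way down to $b=0$, where $g'$ stays bounded with $g'(0^+)=\pi$.
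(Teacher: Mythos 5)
Your proof is correct, and it takes a genuinely different route from the paper's. The paper makes the $\Gamma$-values appear as residues from the start: it integrates $i\sec(2\pi z)\log\Gamma(1/2+z)$ over the boundary of the band $\{|\Re z|<1/2\}$, where the poles of $\sec(2\pi z)$ at $z=\pm 1/4$ produce $\log\bigl(\Gamma(3/4)/\Gamma(1/4)\bigr)$, the two boundary integrals collapse to $\int_{-\infty}^{\infty}\log|t|\,/\cosh(2\pi t)\,dt$ via $\Gamma(1+it)=it\,\Gamma(it)$ and taking real parts, and a rescaling yields $I-\log\sqrt{2\pi}$. You instead keep the contour integrand elementary ($\log z/\cosh z$ on the rectangle of height $\pi$), which by itself only ties the two unknowns together through $2\pi I+g(\pi)=2\pi\log(\pi/2)$ with $g(b)=\int_0^\infty \log(u^2+b^2)/\cosh u\,du$, and you bring in the $\Gamma$-values through the separate evaluation of $g$: residues for $g'(b)$, the telescoping of the $\sec$ partial fractions against the pole at $ib$ giving $g'(b)=4\sum_{k\ge0}(-1)^k/(2k+1+2b/\pi)=\psi\bigl(\tfrac{3+2b/\pi}{4}\bigr)-\psi\bigl(\tfrac{1+2b/\pi}{4}\bigr)$, and integration of $\psi$ to $\log\Gamma$ together with $\Gamma(5/4)=\tfrac14\Gamma(1/4)$. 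I checked the constants (the residue $2\pi\log(\pi/2)+i\pi^2$, the $i\pi^2/2$ terms from $\int_0^\infty dt/\cosh t=\pi/2$, the value $g(\pi)-g(0)=4\pi\log\bigl(\Gamma(1/4)/(2\Gamma(3/4))\bigr)$, and the final $\exp(I)=\sqrt{2\pi}\,\Gamma(3/4)/\Gamma(1/4)$): they all come out right. As for what each approach buys: the paper's argument is shorter, but needs $\log\Gamma$ as a complex function on the band (zero-freeness of $\Gamma$, a branch choice, and integrability of $\log\Gamma(it)/\cosh(2\pi t)$ along the vertical lines); yours avoids complex $\log\Gamma$ entirely, at the cost of the Mittag--Leffler expansion of $\sec$ (equivalently the series for $\psi$) and the bookkeeping of a parameter integral. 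If you write it up, two small points deserve a line each: closing the contour for $g'(b)$ past the infinitely many poles of $1/\cosh$ requires a sequence of expanding contours (say rectangles of height $N\pi$) on which the integrand is $O(N^{-2})$, and at $b=\pi/2$ the pole $u=ib$ collides with the first pole of $1/\cosh$, so either compute that value as a double pole or note that both $g'$ and your closed formula are continuous in $b$ and extend by continuity; neither issue affects the integration over $[0,\pi]$.
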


\begin{proof}
 Consider $f(z)=i\sec(2\pi z)\log\Gamma(1/2+z)$ on the vertical band $B = \big\{|\Re z |<1/2\big\}$. 
 It defines a meromorphic function (for certain branch of the logarithm because $\Gamma$ does not vanish) with simple poles at $z_{\pm}=\pm 1/4$. Clearly the residues satisfy $2\pi i\text{Res}(f,z_{\pm})=\pm \log\Gamma(1/2+z_{\pm})$. 
 This function is integrable along $\partial B$ and the residue theorem shows
 \[
  \log\frac{\Gamma(3/4)}{\Gamma(1/4)}
  =
  \int_{\partial B}
  f
  =
  \int_{-\infty}^\infty
  \frac{\log\Gamma(1+it)}{\cosh(2\pi t)}\; dt
  -
  \int_{-\infty}^\infty
  \frac{\log\Gamma(it)}{\cosh(2\pi t)}\; dt.
 \]
 Using $\Gamma(1+it)=it\Gamma(it)$ and taking real parts to avoid considerations about the branch of the logarithm,
 \[
  \log\frac{\Gamma(3/4)}{\Gamma(1/4)}
  =
  \int_{-\infty}^\infty
  \frac{\log |t|}{\cosh(2\pi t)}\; dt
  =
  \frac{1}{\pi}
  \int_{0}^\infty
  \frac{\log (t/2\pi)}{\cosh t}\; dt
  =
  I
  -
  \int_{0}^\infty
  \frac{\log (2\pi)}{\pi \cosh t}\; dt.
 \]
 The last integral is $\log\sqrt{2\pi}$ just changing $t=\log u$. 
\end{proof}

\begin{proof}[Proof of Theorem~\ref{main}]
 The identity $\theta(z)=\prod_{n=1}^\infty \big(1-e^{2\pi inz}\big)\big(1+e^{\pi i(2n-1)z}\big)^2$ follows from Lemma~\ref{triple} with $q=e^{\pi i z}$. Some elementary manipulations with the definition of $\eta$ show $\theta(z)=\eta^2\big(\frac 12 z+\frac 12\big)/\eta(z+1)$. 
 Let $Q=x^2+y^2$ and $Q'=2x^2-2xy+y^2$ with $z_Q=i$ and $z_{Q'}=\frac{1+i}{2}$. 
 We  have $\zeta(s,Q)=\zeta(s,Q')$ because $Q'=x^2+(x-y)^2$. 
 Then Proposition~\ref{kronecker} implies $|\eta(z_{Q'})|^2/|\eta(z_{Q})|^2=\sqrt{2}$
 and, noting
 $\theta(i)=|\theta(i)|$ and $|\eta(z+1)|=|\eta(z)|$,
 \[
  \theta(i)
  =
  \theta(z_Q)
  =
  \Big|
  \frac{\eta(z_{Q'})}{\eta(z_{Q}+1)}
  \Big|^2|\eta(z_{Q}+1)|
  =\sqrt{2}|\eta(z_Q)|.
 \]
 Recalling Lemma~\ref{integral}, Theorem~\ref{main} is equivalent to 
 $I=-\log\big(2|\eta(z_Q)|^2\big)$. 
 By Lemma~\ref{rn}, we have $\zeta(s, Q)=4\zeta(s)L(s)$ and, since Proposition~\ref{kronecker},  we must prove 
 \[
  \lim_{s\to 1^+}
  \Big(
  \frac{2}{\pi}\zeta(s)L(s)-
  \zeta(2s-1)
  \Big)
  =
  I.
 \]
 It is known $\zeta(s)\sim (s-1)^{-1}+\gamma$ as $s\to 1$ with $\gamma$ the Euler-Mascheroni constant, and it admits a short elementary proof \cite[p.23]{iwaniec}. 
 Using\footnote{For a quick proof write
 $\Gamma(s)=\lim_{n\to\infty}\int_0^n\big(1-\frac{x}{n}\big)^n x^{s-1}\; dx$ to obtain, by repeated partial integration,
 $\lim\frac{n! n^s}{s(s+1)\cdots (s+n)}$ (Gauss' definition of $\Gamma$). The derivative of its logarithm at $s=1$ gives finally
 $\Gamma'(1)=-\gamma=\lim\big(\log n -\frac{1}{1} -\frac{1}{2}- \cdots -\frac{1}{n}\big)$.}
 $\Gamma'(1)=-\gamma$ we have
 $\zeta(s)-2\Gamma(s)\zeta(2s-1)\to 0$ and the previous limit is 
 \[
  \lim_{s\to 1^+}
  \Big(
  \frac{4}{\pi}\Gamma(s)L(s)-1
  \Big)
  \zeta(2s-1)
  =
  \lim_{s\to 1^+}
  \frac{4\Gamma(s)L(s)-\pi}{2\pi(s-1)}
  =
  \frac{2}{\pi}
  \frac{d}{ds}\Big|_{s=1}\big(\Gamma(s)L(s)\big)
 \]
 by L'H\^opital's rule. It only remains to show that this derivative is $\pi I/2$. Plainly $\Gamma(s)n^{-s}=\int_0^\infty t^{s-1}e^{-nt}\; dt$. Then 
 \[
  \Gamma(s)L(s)=
  \int_0^\infty 
  t^{s-1}
  \big(
  e^{-t}
  -e^{-3t}
  +e^{-5t}
  -e^{-7t}+\dots
  \big)\; dt
  =
  \int_0^\infty 
  \frac{t^{s-1}}{2\cosh t}\; dt
 \]
 and Lemma~\ref{integral} implies the result differentiating under the integral sign.
\end{proof}


\subsection*{Acknowledgments}
I am deeply indebted to E. Valenti.


\

\textsc{Departamento de Matem\'aticas and ICMAT, Universidad Aut\'onoma de Madrid, 28049 Madrid, Spain}
 
\textit{Email address}: \url{fernando.chamizo@uam.es}

\end{document}